\newtheorem{thm}{Theorem}[section]
\newtheorem{lem}[thm]{Lemma}
\newtheorem{rem}[thm]{Remark}
\numberwithin{equation}{section}
\newcommand{\al}{\alpha}
\def\dz{\delta}
\def\sz{\sigma}
\def\da{\delta}
\def\f{\frac}
\def\({\Bigl(}
\def \){ \Bigr)}
 \def\rb{{\mathbf r}}
 \def\RR{{\mathbb R}}
\def\sa{\sigma}
\def\sz{\sigma}
\def\ss{{\Bbb S}^{d}}
\newcommand{\tr}{\triangle}
\def\lb{\langle}
\def\rb{\rangle}
\begin{document}
\def\RR{\mathbb{R}}
\def\Exp{\text{Exp}}
\def\FF{\mathcal{F}_\al}

\title[] {On filtered polynomial approximation on the sphere}

\author{Heping Wang} \address{ School of Mathematical Sciences, Capital Normal
University,
Beijing 100048,
 China.}
\email{wanghp@cnu.edu.cn.}
\author{Ian H. Sloan} \address{School of Mathematics and Statistics, UNSW
Australia, Sydney, NSW, 2052,
Australia}\email{i.sloan@unsw.edu.au}

\date{\today}
\keywords{Filtered polynomial approximation;  Sobolev classes.}
\subjclass[2010]{41A25, 41A46.}

\thanks{
 The first author was supported by the
National Natural Science Foundation of China (Project no. 11271263),
 the  Beijing Natural Science Foundation (1132001) and BCMIIS. The second author acknowledges the support of the Australian Research Council through its Discovery program.
 }

\begin{abstract} This paper considers filtered polynomial approximations
on the  unit sphere $\ss\subset \mathbb{R}^{d+1}$, obtained by truncating
smoothly the Fourier series of an integrable function $f$ with the help of
a ``filter'' $h$, which is a real-valued continuous function on
$[0,\infty)$ such that $h(t)=1$ for $t\in[0,1]$ and $h(t)=0$ for $t\ge2$.
The resulting ``filtered polynomial approximation'' (a spherical
polynomial of degree $2L-1$) is then made fully discrete by approximating
the inner product integrals by an $N$-point cubature rule of suitably high
polynomial degree of precision, giving an approximation called ``filtered
hyperinterpolation''. In this paper we require that the filter $h$ and all
its derivatives up to $\lfloor\tfrac{d-1}{2}\rfloor$ are absolutely
continuous, while its right and left derivatives of order $\lfloor
\tfrac{d+1}{2}\rfloor$ exist everywhere and are of bounded variation.
Under this assumption we show that for a function $f$ in the Sobolev space
$W^s_p(\mathbb{S}^d),\ 1\le p\le \infty$, both approximations are of the
optimal order $ L^{-s}$, in the first case for $s>0$ and in the second
fully discrete case for $s>d/p$.

\end{abstract}

\maketitle
\input amssym.def

\section{Introduction}
\label{sec:disNsph.intro}

This paper is concerned with constructive polynomial approximation on the
$d$-dimensional unit sphere $\mathbb{S}^d\subset \mathbb{R}^{d+1}$. Our
ultimate goal is an approximation scheme that uses only function values at
selected points (sometimes called ``standard information''), but as an
intermediate step we will consider also an approximation scheme that
allows inner product integrals (``linear information''). The intermediate
approximation scheme (defined precisely in Subsection \ref{ssec:intro_sub}
below) starts with the Fourier (or Laplace) series for an integrable
function $f$, then truncates it smoothly with a suitable ``filter'' $h$.
The final approximation is then obtained by approximating the inner
product integrals by a cubature rule of suitably high polynomial degree.
Following \cite{SloW}, we shall call the resulting scheme ``filtered
hyperinterpolation''.  We shall call the intermediate approximation
``filtered polynomial approximation''.

Remarkably,  the convergence rate for both of the filtered approximation
schemes is optimal (as explained in Remark 1.2 below or \cite{Tem}), in a
variety of Sobolev space settings. Precisely, under suitable conditions on
the filter $h$ and the cubature rule, and for $s>d/p$ and $1 \le p \le
\infty$, the $L_p$ error of the fully discrete filtered hyperinterpolation
approximation $V_{L,N}$ (a spherical polynomial of degree $2L-1$ defined
in \eqref{1.2} below) satisfies, for a function $f$ in the Sobolev space
$W^s_p(\mathbb{S}^d)$ (see Section \ref{sec:spaces} for definitions of the
Sobolev spaces $W_p^s(\ss)$),
\begin{equation}\label{1.1}
\|f-V_{L,N}(f)\|_{p}\le c L^{-s} \|f\|_{W^s_p},
\end{equation}
where $c$ depends on $p,\ d,\ s$ and the filter $h$ but not on $f$ or $L$
or the cubature rule. The condition $s>d/p$ is the well known necessary
and sufficient condition for $W_p^s(\mathbb{S}^d)$ to be continuously
embedded in $C(\mathbb{S}^d)$, and is unavoidable in any approximation
scheme that employs function values.  The intermediate filtered
approximation scheme has the same rate of convergence, but no requirement
on the smoothness order of the integrand $f$ beyond $s\ge 0$.

The optimal convergence result \eqref{1.1} was proved by Mhaskar \cite{M}
in the context of neural nets for filters of $C^\infty$ smoothness, and by
Sloan and Womersley \cite{SloW} for the case of $p=\infty$ and filters of
smoothness $C^{d+1}$. In the present paper we prove the result \eqref{1.1}
for all values of $p\in [1,\infty]$,  and for filters of smoothness
$W^{\lfloor\frac{d+1}2\rfloor}BV$, where $W^rBV,\ r\in\Bbb N$, denotes the
set of all absolutely continuous functions $h$ whose derivatives of all
orders up to $r-1$ are absolutely continuous and whose right and left
$r$-order derivatives $h^{(r)}_+, h^{(r)}_-$ of $h$ exist everywhere and
are of bounded variation, while $\lfloor a\rfloor$ is the largest integer
not exceeding $a\in\Bbb R$.  For example for the important special case
$d=2$ it is sufficient that $h\in W^1BV$, which certainly holds if $h\in
C^2$, For general $d\ge 2$ we note that
$C^{\lfloor\frac{d+3}2\rfloor}\subset W^{\lfloor\frac{d+1}2\rfloor}BV$ and
$d+1>\lfloor\frac{d+3}2\rfloor$ if $d\ge 2$, so we essentially improve
upon the previous results. Moreover, we prove by counterexample that the
present condition on the smoothness of the filter is  not generally
achieved for filters of smoothness merely
$W^{\lfloor\frac{d-1}2\rfloor}BV$.

The paper \cite{Sloan11} constructed piecewise polynomial filters of
smoothness merely $C^{d-1}$ which generalise the de la
Vall\'ee-Poussin construction for $d=1$, and studied their
properties  for the intermediate semi-discrete approximation and
$p=\infty$. Since these filters lie in $W^dBV$, and since
$d>\lfloor\tfrac{d+1}{2}\rfloor$ for all $d\ge 2$, these filters too
are covered by the present theory.  In the special case $d=2$, even
the de la Vall\'ee-Poussin filter itself (which is simply a
piecewise-linear $C^0$ filter) is covered by the present theory.

\subsection{The approximation schemes}\label{ssec:intro_sub}

The Fourier series for $f\in L_1(\mathbb{S}^d)$ is
\[
\sum_{\ell=0}^\infty\sum_{k=0}^{Z(d,\ell)}  \langle
f,Y_{\ell,k}\rangle Y_{\ell,k}.
\]
Here $\langle \cdot, \cdot\rangle$ denotes the $L_2$ inner product
\begin{equation}\label{1.2-0}
\langle f,g\rangle :=\int_{\mathbb{S}^d}fg d \sigma,
\end{equation}
where $\sigma$ denotes the normalized rotationally invariant  measure  on
$\mathbb{S}^d$, $Z(d,\ell)$ is the dimension of the space of homogeneous
harmonic polynomials on $\mathbb{R}^{d+1}$ of degree $\ell$, and
$(Y_{\ell,k{}})_{\ell=0,1,\ldots;\,k=1,\ldots,Z(d,\ell)}$ is a complete
set of orthonormal spherical harmonics on $\mathbb{S}^d$.

In the filtered polynomial approximation scheme the terms in the
Fourier series are to be modified by multiplication by $h(\ell/L)$,
where $h\in C(\mathbb{R}_+)$ (a ``filter'') satisfies
\[
h(x)=\begin{cases}1\quad \mbox{ if } x\in[0,1],\\
0\quad\mbox{ if } x\ge 2. \end{cases}
\]
Thus the first stage of approximation is
\begin{equation}\label{1.2-1}
f\approx V_{L}(f)\equiv V_L^h(f) :=
\sum_{\ell=0}^\infty\sum_{k=0}^{Z(d,\ell)} h(\tfrac{\ell}{L}) \langle
f,Y_{\ell,k}\rangle Y_{\ell,k}, \end{equation} Note that because of the
properties of the filter we have $V_L(f)\in \Pi^d_{2L-1}$, as well as
$V_L(f)=f$ if $f\in\Pi_L^d$,  where $\Pi^d_L$ is the set of spherical
polynomials on $\mathbb{S}^d$ of degree $\le L$, which is just the set of
polynomials of degree $\le L$ on $\mathbb{R}^{d+1}$ restricted to
$\mathbb{S}^d$. These are properties we want to preserve in the second
stage of approximation.

We obtain the second and final stage of the approximation by discretizing
the inner product \eqref{1.2-0} by an $N$-point cubature rule,
\begin{equation}\label{cub}
\int_{\mathbb{S}^d} g(x)d\sigma(x) \approx Q_N(g):= \sum_{i=1}^N W_i
g(y_i),
\end{equation}
where the points $y_i\in \mathbb{S}^d$ and weights $W_i>0$ are
such that the cubature approximation becomes exact for $g\in
\Pi^d_{3L-1}$, that is such that
\begin{equation}\label{exact}
\int_{\mathbb{S}^d} P(x)d\sigma(x)= Q_N(P)\quad \mbox{for }P\in
\Pi^d_{3L-1}.
\end{equation}
(The $3L-1$ in this condition can be reduced to
$\lceil(2+\delta)L\rceil-1$ for arbitrary but fixed $\delta>0$, but for
simplicity we shall stick with $3L-1$.)  Thus the final approximation is
defined by
\begin{equation}\label{1.2}
V_{L,N}(f)\equiv
V_{L,N}^h(f):=\sum_{\ell=0}^\infty\sum_{k=0}^{Z(d,\ell)}
h(\tfrac{\ell}{L}) \langle f,Y_{\ell,m}\rangle_{Q_N} Y_{\ell,k},
\end{equation}
where $\langle \cdot,\cdot\rangle_{Q_N}$ is the discrete version of
the inner product \eqref{1.2-0}, defined by
\[
\langle f,g\rangle_{Q_N} := Q_N(fg).
\]

Subsequently, the notation $ \ A(N)\asymp B(N) $ means that $A(N)\ll B(N)$
and $A(N)\gg B(N)$, and $A(N)\ll B(N)$ means that there exists a positive
constant $c$ independent of $N$ such that $A(N)\leq cB(N)$, and $A(N)\gg
B(N)$ means $B(N)\ll A(N)$. We write also $a_+:=\max\{a,0\}.$

The main result of the paper can be formulated as follows.

\begin{thm}
Let $h$ be a $W^{\lfloor\frac{d+1}2\rfloor}BV$ filter,  and let
$V_L$ and $V_{L,N}$ be as in \eqref{1.2-1} and \eqref{1.2} with
\eqref{exact} respectively. Then for all $f\in W_p^s(\ss)$, $1\le
p\le \infty$, $s>0$, the inequality
\begin{equation}\label{1.2-2} \|f-V_{L}(f)\|_{p}\ll L^{-s}
\|f\|_{W^s_p}
\end{equation}
holds, and if $s>d/p$ then \eqref{1.1} also holds, that is
\begin{equation}\label{repeat} \|f-V_{L,N}(f)\|_{p}\ll L^{-s}
\|f\|_{W^s_p}.
\end{equation}

\end{thm}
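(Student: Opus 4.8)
The plan is to derive both estimates from two structural properties of the operators---\emph{polynomial reproduction} and \emph{uniform boundedness}---combined with the Jackson (direct) theorem on the sphere. Writing $V_L$ as a convolution,
\[
V_L(f)(x)=\int_{\mathbb{S}^d}K_L(x\cdot y)f(y)\,d\sigma(y),\qquad K_L(x\cdot y)=\sum_{\ell=0}^\infty h(\tfrac{\ell}{L})\sum_{k=1}^{Z(d,\ell)}Y_{\ell,k}(x)Y_{\ell,k}(y),
\]
the property $h\equiv 1$ on $[0,1]$ gives $V_L(P)=P$ for $P\in\Pi^d_L$. Granting the uniform bound $\|V_L(f)\|_p\ll\|f\|_p$ for all $1\le p\le\infty$, the semi-discrete estimate \eqref{1.2-2} follows in one line: for any $P\in\Pi^d_L$,
\[
\|f-V_L(f)\|_p\le\|f-P\|_p+\|V_L(f-P)\|_p\ll\|f-P\|_p,
\]
so that, with $E_L(f)_p:=\inf_{P\in\Pi^d_L}\|f-P\|_p$, one gets $\|f-V_L(f)\|_p\ll E_L(f)_p\ll L^{-s}\|f\|_{W^s_p}$, the last step being Jackson's theorem. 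Thus the whole weight of \eqref{1.2-2} rests on the uniform boundedness of $V_L$, equivalently on the uniform estimate $\sup_x\int_{\mathbb{S}^d}|K_L(x\cdot y)|\,d\sigma(y)\ll 1$, which yields $\|V_L\|_{1\to1},\|V_L\|_{\infty\to\infty}\ll1$, with Riesz--Thorin interpolation covering $1<p<\infty$.

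The hard part is this kernel bound, and it is exactly here that the class $W^{\lfloor(d+1)/2\rfloor}BV$ is forced. By the addition theorem $K_L(\cos\theta)$ is a weighted sum of Gegenbauer polynomials whose bulk asymptotics have amplitude $\asymp\ell^{(d-1)/2}(\sin\theta)^{-(d-1)/2}$ and oscillation $\cos((\ell+\tfrac{d-1}2)\theta-\text{phase})$. Summation by parts converts smoothness of $h$ into decay in $\theta$: after $r:=\lfloor(d+1)/2\rfloor$ summations one gains a factor $\theta^{-r}$ at the cost of $\sum_\ell|\Delta^r(h(\tfrac{\ell}{L})\ell^{(d-1)/2})|\ll L^{(d+1)/2-r}$, and bookkeeping the powers shows that this leaves the tail $\int_{1/L}^{\pi/2}|K_L(\cos\theta)|(\sin\theta)^{d-1}\,d\theta$ only logarithmically (for $d$ odd) or polynomially (for $d$ even) divergent. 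The BV hypothesis supplies precisely the missing amount: since $h^{(r)}_\pm$ are of bounded variation, one further summation by parts, of Stieltjes type against $dh^{(r)}$, is permissible; it gains an additional factor $\theta^{-1}$ while dropping the amplitude by a factor $L^{-1}$, via the finite-difference estimate $\sum_\ell|\Delta^{r+1}(h(\tfrac{\ell}{L})\ell^{(d-1)/2})|\ll L^{(d-1)/2-r}\,\mathrm{Var}(h^{(r)})$. This trade balances exactly at the critical dimension and renders the tail $O(1)$ in every $d$, while the peak $\theta\lesssim1/L$ contributes $O(1)$ trivially because $|K_L|\ll L^d$ there. I expect the delicate point to be the uniform control of the Gegenbauer asymptotics across the whole range of $\theta$ (including the transition regions $\theta\asymp1/L$ and $\theta\asymp\pi$) together with this sharp finite-difference bound; running the same computation with $r$ replaced by $\lfloor(d-1)/2\rfloor$ is what fails and underlies the announced counterexample.

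For the fully discrete estimate \eqref{repeat} I first note that $V_{L,N}$ also reproduces $\Pi^d_L$: if $P\in\Pi^d_L$ then for each $\ell<2L$ the product $PY_{\ell,k}\in\Pi^d_{3L-1}$, so the exactness \eqref{exact} gives $\langle P,Y_{\ell,k}\rangle_{Q_N}=\langle P,Y_{\ell,k}\rangle$ and hence $V_{L,N}(P)=V_L(P)=P$. The same exactness gives $(V_L-V_{L,N})(P)=0$, whence for any $P\in\Pi^d_L$,
\[
(V_L-V_{L,N})(f)=V_L(f-P)-V_{L,N}(f-P).
\]
Splitting $f-V_{L,N}(f)=(f-V_L(f))+(V_L-V_{L,N})(f)$ and using \eqref{1.2-2}, it remains to bound $\|V_{L,N}(g)\|_p$ with $g:=f-P$. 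Duality is the efficient route: since $\int K_L(x\cdot y_i)\phi(x)\,d\sigma(x)=V_L(\phi)(y_i)$, for $\|\phi\|_{p'}=1$,
\[
\Big|\int V_{L,N}(g)\,\phi\,d\sigma\Big|=\Big|\sum_iW_i\,g(y_i)\,V_L(\phi)(y_i)\Big|\le\Big(\sum_iW_i|g(y_i)|^p\Big)^{1/p}\Big(\sum_iW_i|V_L(\phi)(y_i)|^{p'}\Big)^{1/p'}.
\]
As $V_L(\phi)\in\Pi^d_{2L-1}$, a Marcinkiewicz--Zygmund inequality for the positive-weight rule \eqref{exact} bounds the second factor by $\ll\|V_L(\phi)\|_{p'}\ll\|\phi\|_{p'}=1$, using boundedness of $V_L$ on $L_{p'}$; hence $\|V_{L,N}(g)\|_p\ll(\sum_iW_i|g(y_i)|^p)^{1/p}$.

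The final ingredient is a sampling (discretization) inequality for the nodes: for $s>d/p$ and $g\in W^s_p(\mathbb{S}^d)$,
\[
\Big(\sum_iW_i|g(y_i)|^p\Big)^{1/p}\ll\|g\|_p+L^{-s}\|g\|_{W^s_p},
\]
valid because \eqref{exact} forces the nodes to have mesh norm $\ll1/L$; this is the step that genuinely needs $s>d/p$, both for point values to be defined and for the inequality to hold. I would then apply the scheme with the specific choice $P=V_{\lceil L/2\rceil}(f)\in\Pi^d_{L-1}\subset\Pi^d_L$, so that $g=f-V_{\lceil L/2\rceil}(f)$ satisfies $\|g\|_p\ll L^{-s}\|f\|_{W^s_p}$ by \eqref{1.2-2} at scale $\lceil L/2\rceil$ and, crucially, $\|g\|_{W^s_p}\le\|f\|_{W^s_p}+\|V_{\lceil L/2\rceil}(f)\|_{W^s_p}\ll\|f\|_{W^s_p}$, the last bound because $V_L$ is a function of the Laplace--Beltrami operator and so commutes with $(-\Delta)^{s/2}$, making it bounded on $W^s_p$ as well as on $L_p$. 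Combining, $\|V_{L,N}(g)\|_p\ll L^{-s}\|f\|_{W^s_p}$, and together with $\|V_L(f-P)\|_p\ll\|f-P\|_p\ll L^{-s}\|f\|_{W^s_p}$ and \eqref{1.2-2} this yields \eqref{repeat}. The subtlety that makes the fully discrete rate optimal rather than the naive $L^{-(s-d/p)}$ is precisely this: controlling the \emph{discrete} $\ell_p$ norm of the error by its $L_p$ norm plus a genuinely $s$-th order term, instead of by the crude $\|g\|_\infty$.
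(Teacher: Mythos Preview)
Your outline is correct, but both halves diverge from the paper's argument.

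For the uniform bound $\sup_L\|V_L\|<\infty$, the paper does not touch Gegenbauer asymptotics at all. Instead it applies Abel summation $r+1$ times, $r=\lfloor(d+1)/2\rfloor$, to write
\[
V_L(f)=\sum_{k=0}^{2L}\triangle^{r+1}h(\tfrac kL)\,A_k^{r}\,\sigma_k^{r}(f),
\]
where $\sigma_k^r$ is the Ces\`aro $(C,r)$ mean, known to be uniformly bounded on $L_\infty$ precisely when $r>\tfrac{d-1}{2}$. The $W^rBV$ hypothesis is used only to show $\sum_k|\triangle^{r+1}h(k/L)|\le L^{-r}\|h^{(r)}_+\|_{BV}$, and since $A_k^r\ll L^r$ the bound follows in two lines. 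Your pointwise route through the oscillatory Gegenbauer asymptotics reaches the same conclusion and makes the localisation of $K_L$ explicit, but it is much more delicate to execute cleanly (the transition regions and the Leibniz bookkeeping you allude to are genuine work), whereas the Ces\`aro reduction packages all of that into one classical citation.

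For the fully discrete estimate, the paper decomposes $f=\sum_r\tau_r(f)$ with $\tau_r(f)=V_{2^r}(f)-V_{2^{r-1}}(f)\in\Pi^d_{2^{r+1}}$ and, for each $r\ge m$ with $2^m\asymp L$, proves directly that
\[
\|V_{L,N}(\tau_r(f))\|_p\ll\bigl(2^{r+1}/L\bigr)^{d/p}\|\tau_r(f)\|_p
\]
by H\"older on the cubature sum together with Dai's lemma (Lemma~3.1), which plays exactly the role of your Marcinkiewicz--Zygmund inequality. Summing the geometric series in $r$ gives \eqref{repeat}. Your duality argument is a pleasant repackaging of the same ingredients: the bound $\|V_{L,N}(g)\|_p\ll(\sum_i W_i|g(y_i)|^p)^{1/p}$ is precisely MZ/Lemma~3.1 applied to $V_L(\phi)\in\Pi^d_{2L-1}$, and the ``sampling inequality'' you invoke, once proved, is again Lemma~3.1 applied termwise to the dyadic blocks of $g$. (Your justification ``because the nodes have mesh norm $\ll 1/L$'' is not quite the right reason; what one actually uses is that the positive rule is exact on $\Pi^d_{3L-1}$, which feeds into Lemma~3.1.) The paper's route is shorter because it applies the dyadic decomposition once, to $f$ itself, and avoids the detour through a general sampling inequality and the auxiliary bound $\|V_{\lceil L/2\rceil}(f)\|_{W^s_p}\ll\|f\|_{W^s_p}$; your route, on the other hand, isolates reusable operator--theoretic statements ($\|V_{L,N}\|_{L_p\to L_p}$ via duality, and an $L_p$ sampling theorem) that have independent interest.
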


\begin{rem}For $N\in \Bbb N$, the sampling numbers (for optimal recovery) of the Sobolev classes
$BW_p^s(\ss)$ (the unit ball of the space $W_p^r(\ss)$) in $L_p(\ss)$ are
defined by
$$g_N(BW_p^s(\ss),L_p(\ss)):=\inf_{\xi_1,\dots,\xi_N; \varphi}
\sup_{f\in BW_p^s(\ss)}\|f-\varphi(f(\xi_1),\dots,f(\xi_N))\|_p, $$ where
the infimum is taken over all $N$ points $\xi_1,\dots,\xi_N$ in $\ss$ and
all mappings $\varphi$ from $\Bbb R^N$ to $L_p(\ss)$. It follows from
\cite{WW} that
\begin{equation}\label{1.3}
g_N(BW_p^s(\ss),L_p(\ss))\asymp N^{-s/d}.
\end{equation}
To express the error for filtered hyperinterpolation in terms of $N$
rather than $L$, we use the known facts that the minimal number of points
$N$ for a cubature rule with polynomial degree of precision $L$ is
$N\asymp L^d$ (see \cite{Cools97}, Theorem 7.1),  and that cubature rules
with the number of points $N$ of this order do exist (see
\cite{HesseSWHandbook}). For such optimal cubature rules it follows from
\eqref{repeat} that
$$\sup_{f\in BW_p^s(\ss)}\|f-V_{L,N}(f)\|_p\asymp N^{-s/d}\asymp
g_N(BW_p^s(\ss),L_p(\ss)),$$
which implies that the
filtered hyperinterpolation operators $V_{L,N}$ are asymptotically optimal
algorithms in the sense of optimal recovery.

\end{rem}

The paper is organized as follows. Section 2 contains some basic notation
and  properties  concerning harmonic analysis on $\ss$, the operators
$V_L$ and $V_{L,N}$. In Section \ref{sec:proofs} we prove Theorem 1.1. The
proof is modelled on one in \cite{Wang09} for the hyperinterpolation
approximation, where hyperinterpolation is the approximation obtained by
replacing the filter $h$ in \eqref{1.2} by the characteristic function of
$[0,1]$ (see \cite{S}).

\section{Preliminaries}\label{sec:spaces}

Let $\ss=\{x\in\mathbb{R}^{d+1}:\, |x|=1\}$ ($d\ge 2$) be the unit sphere
of $\mathbb{R}^{d+1}$ (with $|x|$ denoting the Euclidean norm in
$\mathbb{R}^{d+1}$) endowed with the rotationally invariant measure $\sz$
normalized by $\int_{\ss}d\sz=1$. For $1\leq p \leq\infty$, denote by $L_p
\equiv L_p(\ss)$ the collection of real measurable functions $f$ on $\ss$
with finite norm
$$\|f\|_p=\Big(\int_{\ss}|f(x)|^p\, d\sz(x)\Big)^\frac{1}{p}<+ \infty,\    \
1\le p<\infty.$$ For $p=\infty,$  the essential supremum is understood
instead of the integral.

  We denote by $\mathcal{H}_\ell^d$ the space of
all spherical harmonics of degree $l$ on $\ss$. It is well known that the
dimension of $\mathcal{H}_\ell^d$ is
$$ Z(d,\ell):={\rm dim}\,\mathcal{H}_\ell^d=\left\{\begin{array}{cl} 1,\ \
\
   & {\rm if}\ \ \ell=0,\\
\frac{(2\ell +d-1)\,(\ell +d-2)!}{(d-1)!\ l!},\ \     & {\rm if}\ \
\ell=1,2,\dots,
\end{array}\right.$$ and$$Z(d,\ell)\asymp l^{d-1},\ \ \ell=1,2,\dots.$$

Let $$\{Y_{\ell,k}\equiv Y_{\ell,k}^{d}\ |\ k=1,\dots, Z(d,\ell)\}$$ be a
fixed orthonormal basis for $\mathcal{H}_\ell^d$. Then
$$\{Y_{\ell,k}\ |\ k=1,\dots, Z(d,\ell),\ \ell=0,1,2,\dots\}$$ is
an orthonormal basis for the Hilbert space $L_2(\ss)$. Thus any $f\in
L_2(\ss)$ can be expressed by its Fourier (or Laplace)
series:$$f=\sum_{\ell=0}^{\infty}
H_\ell(f)=\sum_{\ell=0}^{\infty}{\sum_{k=1}^{Z(d,\ell)} \lb
f,Y_{\ell,k}\rb Y_{\ell,k}}.$$
 where $H_\ell(f)\equiv H_\ell^d(f),\ \ell=0,1,\dots,$ denote the
orthogonal  projections of $f$ onto $\mathcal{H}_\ell^d$, and
$$\lb f,Y_{\ell,k}\rb =\int_{\ss} f(x)
Y_{\ell,k}(x) \, d\sz(x)$$ are the  Fourier coefficients of $f$.
 Let
$C_\ell^\lambda (t)$ be the usual ultraspherical (Gegenbeuer) polynomial
of order ${\lambda}$ normalized by $C_\ell^{\lambda}(1)=\frac{\Gamma(\ell
+2\lambda)}{\Gamma(2\lambda)\Gamma(\ell +1)}$ and  generated by
$$ \frac 1{(1-2zt+z^2)^{\lambda}}=\sum_{k=0}^\infty
C_k^\lambda (t)z^k \ (0\leq z <1).$$ (See (\cite{Sz}, p. 81). The operator
$H_\ell$ can be expressed as follows:
$$H_\ell f(x)={\sum_{k=1}^{Z(d,\ell)}
\lb f,Y_{\ell,k}\rb
Y_{\ell,k}}(x)=\int_{\mathbb{S}^{d}}f(y)K_\ell(x,y)d\sigma(y),$$
where$$K_\ell(x,y)=\sum_{k=1}^{Z(d,\ell)}Y_{\ell,k}(x)Y_{\ell,k}(y)=\frac{\ell
+\lambda}{\lambda}C_\ell^{\lambda}( x\cdot y), \ x,y\in\ss,
\lambda=\frac{d-1}{2},\ d\ge2,$$ which is a ``zonal'' kernel, that is one
depending only on $x\cdot y$.

 It is well known
that the spaces $\mathcal{H}_\ell^d, \ \ell=0,1,2, \dots$, are just the
eigenspaces corresponding to the eigenvalues $-\ell(\ell +d-1)$ of the
Laplace-Beltrami operator $\triangle$ on the sphere. Given $s>0$, we
define the $s$-th order Laplace-Beltrami operator $(-\triangle)^s$ on
$\ss$ in a distributional sense by
$$H_0((-\triangle)^s(f))=0,\ \ \ H_\ell((-\triangle)^s(f))=(\ell(\ell
+d-1))^s H_\ell(f), \  \ \ell=1,2,\dots\ ,$$ where $f$ is a distribution
on $\ss$. For $f\in L_p$, if there exists a function $g\in L_p$ such that
$H_\ell(g)=(\ell(\ell +d-1))^{s/2} H_\ell(f)$ for all $\ell\in\Bbb N$,
then we call $g$ the $s$-th order derivative of $f$ in the sense of $L_p$
and write $g=f^{(s)}=(-\triangle)^{s/2}f$.
  For $s>0$ and $1\le p\le \infty$, the Sobolev space
$W_p^s\equiv W_p^s(\ss)$ is defined by
$$W_p^s:=\Big\{f\in L_p\ :\   \|f\|_{W_p^s}:=\|f\|_p+\|f^{(s)}\|_p<\infty\Big\},$$
while the Sobolev class $BW_p^s$ is defined to be the unit ball of
$W_p^s$. For $f\in L_p, \ 1\le p\le\infty$, we define
$$E_L(f)_p:=\inf\{\|f-P\|_p:P\in\Pi_L^d\},\ \ \
L\in\mathbb{N}.$$ It follows from  the Jackson inequality (see \cite{WL}) that
 $$E_L(f)_p\ll L^{-r}\|f\|_{W_p^s}.$$

Let $h$ be a ``filter", i.e., a continuous function on $[0,\infty)$ such
that $h(t)=1$ if $t\in [0,1]$ and $h(t)=0$ if $t\ge 2$.
 For  $L\geq1$ we define a zonal kernel $\Phi_L$ by
\begin{equation}\label{2.1} \Phi_{L }(x,y)\equiv \Phi_{L }^h(x,y)=\sum_{\ell=0}^{\infty}
 h (\frac lL) K_\ell(x,y), \ \ x,y\in \mathbb{S}^{d},
 \end{equation}
and the operators $V_{L}$  by
\begin{align}V_L(f)(x)&\equiv V_{L}^h(f)(x)=\sum_{\ell=0}^{\infty}{\sum_{k=1}^{Z(d,\ell)}h (\frac{l}{L}) \lb f,Y_{\ell,k}\rb
Y_{\ell,k}}=
\sum\limits_{\ell=0}^{\infty} h (\frac{l}{L})H_\ell(f)(x)\notag \\
&=\int_{\ss}f(y)\Phi_{L }(x,y)d\sz(y),
 \ f\in L_p\ ,\ 1\leq p\leq\infty.\label{2.2}\end{align}
It follows from the definition of $V_L$ that $V_L(f)\in \Pi_{2L-1}^d$ and
$V_L(P)=P$ if $P\in\Pi_L^d$.

For a linear operator $A$ on $L_p(\ss),\ 1\le p\le \infty$, the operator
norm $\|A\|_{(p,p)}$ of $A$ is defined by
$$\|A\|_{(p,p)}:=\sup\{\|Af\|_p\ : f\in L_p(\ss),\ \|f\|_p\le 1\}.  $$
For simplicity we write $\|A\|=\|A\|_{(\infty,\infty)}$. Using  a duality
argument we obtain
\begin{equation}\label{2.3}
\|V_L\|:=\|V_L\|_{(\infty,\infty)}=
\|V_L\|_{(1,1)}=\sup_{x\in\ss}\int_{\ss}|\Phi_{L}(x,y)|d\sz(y)<\infty,
\end{equation}
noting that the last integral is independent of $x$ because of the zonal
nature of $\Phi$. It follows from the Riesz-Thorin theorem that for $1\le
p\le\infty$,
$$\|V_L\|_{(p,p)}\le
\big(\|V_L\|_{(1,1)}\big)^{1/p}\big(\|V_L\|_{(\infty,\infty)}\big)^{1-1/p}=
\|V_L\|<\infty.$$
Since the operators $V_L$ are linear operators on
$L_p(\ss)$ satisfying $V_L(P)=P$ for $P\in\Pi_L^d$, by the Lebesgue
theorem the
 error of approximation by $V_L$ in the $L_p$ norm can be estimated for $s>0$ as
\begin{align}\label{2.5}\|f-V_L(f)\|_p &\le (1+\|V_L\|_{(p,p)})E_L(f)_p\le (1+\|V_L\|)E_L(f)_p\notag\\
 &\ll (1+\|V_L\|) L^{-s}\|f\|_{W_p^s},\qquad\ f\in W_p^s(\ss). \end{align}

The last inequality in \eqref{2.5} shows that estimation of the operator
norm $\|V_L\|$ is extremely important in numerical computation.  We now
turn to this question.

A  function $f$ on $[a,b]$ is of bounded variation and denoted by $f\in
BV[a,b]$,
 if $$\|f\|_{BV}:=\sup_T\sum_{k=1}^{n}|f(x_k)-f(x_{k-1})|<\infty ,$$
where the supremum is taken over all partitions
$T:a=x_0<x_1<\cdots<x_n=b$. As above, for $r\ge 1$ we denote by
$W^rBV[a,b],\ r\in\Bbb N$ the set of all absolutely continuous functions
$h$ on $[a,b]$ for which $h^{(r-1)}$ is absolutely continuous and
$h^{(r)}_+$ and $h^{(r)}_-$ exist and are of bounded variation on $[a,b]$.
It is easy to verify that the function $f(t)=(1-t/2)_+^a,\ a>0$ belongs to
$W^{\lfloor a\rfloor}BV[0,3]$ and does not belong to $C^{\lfloor
a+1\rfloor}[0,3]$. The following lemma gives a condition on $h$ for which
the operator norms $\|V_L\|$ are uniformly bounded, i.e.,
$$\sup_{L\ge1}\|V_L\|<\infty.$$

\begin{lem}Let $h$ be a $W^{\lfloor\frac{d+1}2\rfloor}BV$ filter and let $V_L$ be given as in \eqref{2.2}. Then we have

(1) for any $f\in L_p(\ss), \ 1\le p\le \infty$,
$V_L(f)\in\Pi_{2L-1}^d$, and $V_L(P)=P$ for all $P\in\Pi_L^d$;

(2) the operator norms $\|V_L\|$ are uniformly bounded;

(3) for  $f\in L_p(\ss), \ 1\le p\le \infty$,
$$\|V_L(f)\|_p\ll \|f\|_p;$$

(4) for any $f\in L_p(\ss), \ 1\le p\le \infty$,
$$\|f-V_L(f)\|_p\ll
E_L(f)_p;$$

 (5) for any $f\in W_p^s(\ss), \ 1\le p\le \infty$, $s>0$,
$$\|f-V_L(f)\|_p\ll L^{-s}\|f\|_{W_p^s}.$$

\end{lem}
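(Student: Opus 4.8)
The plan is to treat part (1) as immediate from the defining properties of the filter, to prove part (2) as the analytic heart of the lemma, and then to deduce (3)--(5) as essentially formal consequences. For (1), since $h(t)=0$ for $t\ge 2$ the coefficient $h(\ell/L)$ vanishes once $\ell\ge 2L$, so the series in \eqref{2.2} is finite and $V_L(f)\in\Pi_{2L-1}^d$; since $h(t)=1$ on $[0,1]$ we have $h(\ell/L)=1$ for $\ell\le L$, whence $V_L(P)=\sum_{\ell=0}^{L}H_\ell(P)=P$ for $P\in\Pi_L^d$, as already recorded after \eqref{2.2}.

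For part (2), by \eqref{2.3} it suffices to bound $\int_{\ss}|\Phi_L(x,y)|\,d\sigma(y)$ uniformly in $L$. Fixing $x$ as the north pole and using the zonal structure $\Phi_L(x,y)=\phi_L(x\cdot y)$ with $\phi_L(\cos\theta)=\sum_{\ell=0}^\infty h(\tfrac{\ell}{L})\tfrac{\ell+\lambda}{\lambda}C_\ell^\lambda(\cos\theta)$, $\lambda=\tfrac{d-1}{2}$, together with the formula for integrating zonal functions, this integral equals a constant multiple of
\[
\int_0^\pi |\phi_L(\cos\theta)|(\sin\theta)^{d-1}\,d\theta .
\]
I would split the $\theta$-integral at $\theta\asymp 1/L$. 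On $[0,1/L]$ the crude bound $|\phi_L(\cos\theta)|\le\phi_L(1)=\sum_\ell h(\tfrac\ell L)Z(d,\ell)\asymp L^d$ (using $|C_\ell^\lambda(\cos\theta)|\le C_\ell^\lambda(1)$) gives a contribution $\ll L^d\int_0^{1/L}\theta^{d-1}\,d\theta\ll 1$. The whole difficulty is the tail $\theta\in[1/L,\pi]$, which demands a quantitative localization estimate for $\phi_L$.

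To obtain that estimate I would apply repeated Abel summation in $\ell$, transferring finite differences onto the sampled filter $\{h(\ell/L)\}$ while exploiting the oscillation of $C_\ell^\lambda(\cos\theta)$ in $\ell$ (each summation by parts costing one factor $\asymp\theta^{-1}$ from the partial sums of the oscillatory kernel, for $\theta\le\pi/2$). The hypothesis enters at the final step: after $r:=\lfloor\tfrac{d+1}2\rfloor$ differences have been moved onto $h(\cdot/L)$ using absolute continuity up to order $r-1$, one further step is carried out as a Riemann--Stieltjes integration against $dh^{(r)}$, and because $h^{(r)}_\pm$ are of bounded variation this costs only the finite total variation $\|h^{(r)}\|_{BV}$ rather than a power of $L$. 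Quantitatively this is the estimate $\sum_\ell|\Delta^{r+1}(h(\tfrac\ell L))|\ll L^{-r}$, which is the precise sense in which $W^rBV$ behaves like $C^{r+1}$. Combined with the standard bounds for $C_\ell^\lambda$ this should yield a localization bound of the form $|\phi_L(\cos\theta)|\ll L^{d}(1+L\theta)^{-(\lambda+r+1)}$ for $1/L\le\theta\le\pi/2$, with the mirror estimate near $\theta=\pi$ following from $C_\ell^\lambda(-t)=(-1)^\ell C_\ell^\lambda(t)$. Substituting and setting $u=L\theta$ then gives
\[
\int_{1/L}^{\pi/2}|\phi_L(\cos\theta)|(\sin\theta)^{d-1}\,d\theta\ll\int_0^\infty\frac{u^{2\lambda}}{(1+u)^{\lambda+r+1}}\,du,
\]
which is finite \emph{precisely} because $r>\lambda$, i.e. because $r=\lfloor\tfrac{d+1}2\rfloor>\tfrac{d-1}2=\lambda$; this is exactly the smoothness threshold in the hypothesis. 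I expect this localization estimate --- establishing the correct exponent uniformly across the regimes $\theta\asymp1/L$, $1/L\ll\theta\ll1$, and $\theta$ near $\pi$ --- to be the main obstacle, since the leading-order Gegenbauer asymptotics alone are not adequate near $\theta\asymp1/L$ and the error terms (or an exact kernel identity) must be controlled.

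Granting (2), the remainder is short. By the Riesz--Thorin bound recorded after \eqref{2.3}, $\|V_L\|_{(p,p)}\le\|V_L\|$ for every $p\in[1,\infty]$, so the uniform bound from (2) gives $\|V_L(f)\|_p\ll\|f\|_p$, which is (3). For (4), fix $f\in L_p$ and an arbitrary $P\in\Pi_L^d$; using $V_L(P)=P$ from (1) together with linearity and boundedness,
\[
\|f-V_L(f)\|_p=\|(f-P)-V_L(f-P)\|_p\le(1+\|V_L\|_{(p,p)})\,\|f-P\|_p,
\]
and taking the infimum over $P\in\Pi_L^d$ yields $\|f-V_L(f)\|_p\ll E_L(f)_p$. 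Finally (5) follows by combining (4) with the Jackson inequality $E_L(f)_p\ll L^{-s}\|f\|_{W_p^s}$ recorded earlier in this section.
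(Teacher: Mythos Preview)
Your treatment of (1) and the deduction of (3)--(5) from (2) coincide with the paper's. For (2), both arguments rest on the same key estimate $\sum_{k}|\Delta^{r+1}h(k/L)|\ll L^{-r}$ with $r=\lfloor\tfrac{d+1}{2}\rfloor$, which you correctly isolate as the place where the $W^rBV$ hypothesis enters; but the two proofs diverge in how that estimate is deployed.

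You aim for a \emph{pointwise} localization bound on the zonal kernel $\Phi_L$ and then integrate; the paper instead stays at the \emph{operator} level. After $r+1$ summations by parts it writes
\[
V_L(f)=\sum_{k=0}^{2L}\Delta^{r+1}h\Bigl(\frac{k}{L}\Bigr)\,A_k^{r}\,\sigma_k^{r}(f),
\]
with $\sigma_k^{r}$ the Ces\`aro $(C,r)$ mean, and then invokes the classical fact that $\|\sigma_k^{r}\|_{(\infty,\infty)}\ll 1$ uniformly in $k$ precisely because $r>\tfrac{d-1}{2}$. Combined with $A_k^{r}\ll L^{r}$ and the difference bound, this gives $\|V_L(f)\|_\infty\ll\|f\|_\infty$ in a few lines. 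Your route, if carried through, would yield strictly more --- a genuine pointwise decay estimate for $\Phi_L$ --- but at the cost of the delicate Gegenbauer analysis you yourself flag as the main obstacle (uniform control across the regimes $\theta\asymp 1/L$, $1/L\ll\theta\ll 1$, and $\theta$ near $\pi$). The paper sidesteps that obstacle entirely by packaging all of the oscillatory cancellation into the known Ces\`aro bound, so its proof of (2) is considerably shorter and does not require any pointwise kernel estimate at all.
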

\begin{proof}It suffices to prove (2). Given $\da>-1$, the Ces\`aro   $(C,\da)$-operators  of the
spherical harmonic  expansion   are defined by
\begin{equation*}\sa_L^\da (f) = \f 1{ A_L^\da} \sum_{ k=0}^L A_{
L-k}^\da H_k(f),\       \ L=0, 1, 2, \ldots,\end{equation*} where $
A_k^\da = \binom{ k+\da} {k},\    \   k=0, 1, 2,\ldots.$ It is well
known that the operator norms $\|\sa_L^\dz\|$ of the Ces\`aro
operators $\sa_L^\da$ are uniformly bounded if and only if
$\dz>\frac{d-1}2$ (see \cite[p. 55]{WL}).

Let $f$ be a real-valued function defined on $\Bbb R$. For $r = 0,1,2,
\dots$, we define the difference operator $\tr^r$ by $$\tr^0=I, \ \ \tr
f(x)=f(x)-f(x+1),\ \ \tr^rf(x)=\tr(\tr^{r-1} f(x)),$$where $I$ denotes the
identity operator. By induction, it is easy to see that
$$\tr^rf(x)=\sum_{k=0}^r(-1)^{k}\binom rkf(x+k).$$ For a sequence
$\{a_k\}_{k=0}^\infty$,  the difference operator $\tr^ra_k$ is
defined as $\tr^r f(k)$ with $f (k) = a_k,\ k\in \Bbb N$. If $h\in
W^rBV$, then we easily verify that
$$\tr^{r}h(\frac kL)=(-1)^{r}L^{-r}\int_{[0,1]^r}h^{(r)}_+(\frac
{k+u_1+\dots+u_r}L)du_1\cdots du_r.$$

 For $r=\lfloor\frac{d+1}2\rfloor$, the summation by parts formula applied
 to \eqref{2.2} gives
\begin{equation}\label{VLf}
V_L(f)=\sum_{k=0}^\infty\tr^{r+1}h(\frac
kL)A_k^{r}\sz_k^r(f)=\sum_{k=0}^{2L}\tr^{r+1}h(\frac
kL)A_k^{r}\sz_k^r(f).
\end{equation}
(See \cite[pp. 408-409]{DX}.) For a $W^{r}BV$ filter $h$ we have
\begin{align*}&\quad\ \sum_{k=0}^{2L}|\tr^{r+1}h(\frac kL)|=
\sum_{k=0}^{2L}|\tr^{r}h(\frac {k+1}L)-\tr^{r}h(\frac
{k}L)|\\
&= L^{-r}
\sum_{k=0}^{2L}\Big|\int_{[0,1]^r}\left(h^{(r)}_+(\frac
{k+1+u_1+\dots+u_r}L)-h^{(r)}_+(\frac
{k+u_1+\dots+u_r}L)\right)du_1\cdots du_r\Big|\\
&\le L^{-r} \int_{[0,1]^r}\sum_{k=0}^{2L}\Big|h^{(r)}_+(\frac
{k+1+u_1+\dots+u_r}L)-h^{(s)}_+(\frac
{k+u_1+\dots+u_r}L)\Big|du_1\cdots du_r\\&\le
L^{-r}\|h^{(r)}_+\|_{BV}.\end{align*} We note that  for
$r=\lfloor\frac{d+1}2\rfloor$ and $k=0,1,\dots, 2L$,
$$ \|\sz_k^r(f)\|_\infty\ll\|f\|_\infty\ {\rm and}\ \ |A_k^r|\ll
(k+1)^r\ll (2L)^r.$$
It then follows from \eqref{VLf} that
\begin{align*}\|V_L(f)\|_\infty&\le\sum_{k=0}^{2L}|\tr^{r+1}h(\frac
kL)|\,A_k^{r}\,\|\sz_k^r(f)\|_\infty\ll
L^{-r}(2L)^r\|f\|_\infty \ll\|f\|_\infty, \end{align*}
which completes the proof
of Lemma 2.1.
\end{proof}

The following lemma gives an  example of a filter $\tilde h$ of smoothness
merely $W^{\lfloor\frac{d-1}2\rfloor}BV$ such that the corresponding
operator norms $\|V_L^{\tilde h}\|$ are not uniformly bounded. This
implies that the optimal order of convergence is not generally achieved
for filters of smoothness merely $W^{\lfloor\frac{d-1}2\rfloor}BV$. So the
condition on the smoothness $W^{\lfloor\frac{d+1}2\rfloor}BV$ of the
filter $h$ for which $\sup_{L}\|V_L^{ h}\|<\infty$   is tight. On the
other hand, we note that this does not preclude the possible existence of
\textbf{particular} filters of smoothness merely
$W^{\lfloor\frac{d-1}2\rfloor}BV$ for which $\sup_L\|V_L^h\|$ is finite.

\begin{lem}There exists a  filter $\tilde h$ of smoothness
merely $W^{\lfloor\frac{d-1}2\rfloor}BV$ such that
$\sup\limits_L\|V_L^{\tilde h}\|=\infty$. \end{lem}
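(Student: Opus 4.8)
The plan is to reduce the statement to a lower bound for the $L_1$-norm of the filtered kernel and then to exhibit a single filter whose kernel norm diverges. By \eqref{2.3}, $V_L^{\tilde h}$ is a zonal convolution operator, and
\[
\|V_L^{\tilde h}\|=\int_{\ss}|\Phi_L^{\tilde h}(x,y)|\,d\sz(y)=c_d\int_0^\pi|\phi_L(\theta)|\,(\sin\theta)^{d-1}\,d\theta ,
\]
where $\phi_L(\theta):=\sum_{\ell=0}^\infty\tilde h(\tfrac{\ell}{L})\frac{\ell+\lambda}{\lambda}C_\ell^\lambda(\cos\theta)$ and $\lambda=\frac{d-1}2$; the second equality uses the zonal formula for $K_\ell$ together with the standard reduction of the integral of a zonal function to a one-dimensional weighted integral. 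Thus it suffices to produce a filter $\tilde h\in W^{\lfloor\frac{d-1}2\rfloor}BV$ for which the last integral tends to $\infty$ with $L$.

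For the construction I would single out the critical exponent $a=\lambda=\frac{d-1}2$ and take
\[
\tilde h(t):=g(t)+\chi(t)\,(2-t)_+^{\lambda},
\]
where $g$ is a fixed $C^\infty$ filter (so $g\equiv1$ on $[0,1]$, $g\equiv0$ on $[2,\infty)$, and $g$ is flat at $t=2$) and $\chi\in C^\infty$ equals $1$ near $t=2$ and is supported in $(\tfrac32,\infty)$. Then $\tilde h$ is a legitimate filter (continuous, $\equiv1$ on $[0,1]$, $\equiv0$ for $t\ge2$), and near $t=2$ it behaves like a nonzero multiple of $(2-t)^{\lambda}$. Since $\lfloor\lambda\rfloor=\lfloor\frac{d-1}2\rfloor$ and $(2-t)_+^{\lambda}\in W^{\lfloor\lambda\rfloor}BV$ (as was noted in the excerpt for $(1-t/2)_+^a$), we have $\tilde h\in W^{\lfloor\frac{d-1}2\rfloor}BV$; on the other hand the singularity $(2-t)_+^{\lambda}$ is one order too rough for $\tilde h$ to belong to $W^{\lfloor\frac{d-1}2\rfloor+1}BV$ (the relevant derivative has a jump when $d$ is odd and blows up like $(2-t)^{-1/2}$ when $d$ is even), so its smoothness is merely $W^{\lfloor\frac{d-1}2\rfloor}BV$.

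To see that $\|V_L^{\tilde h}\|\to\infty$, I would analyse $\phi_L$ on the range $\theta\in[\pi/L,c_0]$, with $c_0$ a small fixed constant. Inserting the Darboux asymptotics for the Gegenbauer polynomials (see \cite{Sz}),
\[
\tfrac{\ell+\lambda}{\lambda}C_\ell^\lambda(\cos\theta)=\frac{c_\lambda\,\ell^{\lambda}}{(\sin\theta)^{\lambda}}\cos\!\Bigl((\ell+\lambda)\theta-\tfrac{\lambda\pi}2\Bigr)+O\!\bigl(\ell^{\lambda-1}(\sin\theta)^{-\lambda-1}\bigr),
\]
and approximating the resulting trigonometric sum by its integral via Euler--Maclaurin summation, one obtains $\phi_L(\theta)=c_\lambda(\sin\theta)^{-\lambda}\,\mathrm{Re}\bigl[e^{-i\lambda\pi/2}L^{\lambda+1}\widehat H(L\theta)\bigr]+(\text{error})$, where $\widehat H(u):=\int_0^\infty\tilde h(t)t^{\lambda}e^{iut}\,dt$. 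The only obstruction to smoothness of the integrand $\tilde h(t)t^{\lambda}$ is the singularity $(2-t)_+^{\lambda}$ at $t=2$, which yields the sharp asymptotics $\widehat H(u)=C\,u^{-\lambda-1}e^{2iu}+o(u^{-\lambda-1})$ with $C\neq0$. Substituting $u=L\theta$, so that $L^{\lambda+1}\widehat H(L\theta)\sim C\,\theta^{-\lambda-1}e^{2iL\theta}$, gives $\phi_L(\theta)\sim C'(\sin\theta)^{-\lambda}\theta^{-\lambda-1}\cos(2L\theta+\varphi_0)$ on $[\pi/L,c_0]$, whence, using $(\sin\theta)^{d-1}\asymp\theta^{2\lambda}$ and $d-1=2\lambda$,
\[
\|V_L^{\tilde h}\|\gg\int_{\pi/L}^{c_0}|\phi_L(\theta)|(\sin\theta)^{d-1}\,d\theta\asymp\int_{\pi/L}^{c_0}\theta^{-1}|\cos(2L\theta+\varphi_0)|\,d\theta\asymp\log L .
\]
This diverges, proving $\sup_L\|V_L^{\tilde h}\|=\infty$.

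The main obstacle is turning these heuristic asymptotics into a rigorous lower bound. One must control the error terms in the Darboux formula uniformly down to $\theta\sim1/L$ (the transition regime, where a Bessel-type Hilb asymptotic is really required), justify the Euler--Maclaurin replacement of the sum by $L^{\lambda+1}\widehat H(L\theta)$ with a remainder of smaller order than the main term, and --- most delicately --- ensure that passing to absolute values does not destroy the main term through cancellation. The decisive point is the non-vanishing of the constant $C$ in the asymptotics of $\widehat H$, which is exactly what the critical singularity $(2-t)_+^{\lambda}$ forces; granting this, the final estimate $\int_{\pi/L}^{c_0}\theta^{-1}|\cos(2L\theta+\varphi_0)|\,d\theta\gg\log L$ follows by summing the elementary contributions of the successive periods of the cosine.
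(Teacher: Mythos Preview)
Your construction of $\tilde h$ is essentially the one in the paper: both arrange a $(2-t)_+^{\lambda}$ singularity at the endpoint while keeping the rest smooth, and your verification that this puts $\tilde h$ in $W^{\lfloor\frac{d-1}2\rfloor}BV$ but no better is correct.

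Where you diverge sharply is in proving $\sup_L\|V_L^{\tilde h}\|=\infty$. You attempt a direct asymptotic computation of the kernel via Darboux asymptotics, Euler--Maclaurin summation, and stationary-phase-type analysis of $\widehat H$. As you yourself concede, turning this into a rigorous lower bound is delicate: the Darboux expansion degenerates near $\theta\sim 1/L$, the sum-to-integral replacement must be justified with an error smaller than the main term, and the non-cancellation when passing to absolute values requires care. None of this is actually carried out, so as written the proof is a plausible programme rather than an argument.

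The paper sidesteps all of this with a one-line reduction. Write $\tilde h=h_0+h_2$ where $h_0(t)=(1-t/2)_+^{\lambda}$ is the full Riesz weight and $h_2:=\tilde h-h_0$ is $C^\infty$ (since the two agree near $t=2$). Then $V_L^{\tilde h}=V_L^{h_0}+V_L^{h_2}$; the smooth piece $V_L^{h_2}$ is uniformly bounded by the argument of Lemma~2.1, while $V_L^{h_0}$ is exactly the Riesz mean $R_{2L}^{\lambda}$ at the critical index $\lambda=\tfrac{d-1}2$, known to be unbounded by the classical equivalence (due to M.~Riesz) of Riesz and Ces\`aro summability of the same order together with the fact that the Ces\`aro means are uniformly bounded iff $\delta>\tfrac{d-1}2$. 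Your decomposition $\tilde h=g+\chi(2-t)_+^{\lambda}$ is one step away from this: rewrite $\chi(2-t)_+^{\lambda}=(2-t)_+^{\lambda}-(1-\chi)(2-t)_+^{\lambda}$, absorb the smooth correction into $g$, and you recover the paper's argument without any asymptotic analysis.
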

\begin{proof}
Let $h_0(t):=(1-t/2)^{(d-1)/2}_+$, and let $h_1$ be a $C^\infty$ filter on
$[0,\infty)$ such that $h_1(t)$ if $t\in [0,1]$ and $h_1(t)=0$ if $t\ge
3/2$. We set
$$\tilde h(t):=h_0(t)-h_0(t)h_1(t)+h_1(t)=h_0(t)\left(1-h_1(t)\right)+h_1(t).$$
Obviously, $\tilde h$ satisfies $\tilde h(t)=h_1(t)=1$ if
$t\in[0,1]$ and $\tilde h(t)=h_0(t)$ if $t\ge 3/2$, thus $\tilde{h}$
is a filter, but it is a filter of smoothness merely
$W^{\lfloor\frac{d-1}2\rfloor}BV$.  We note that
$h_2(t):=-h_0(t)h_1(t)+h_1(t)$ belongs to $C^\infty$, so the
corresponding operator norms $\|V_L^{ h_2}\|$ are uniformly bounded.
On the other hand, when $h(t)=(1-t/2)^\dz_+$, the corresponding
operators $V_L^{h}$
 are the Riesz  operators
$R_{2L}^\da$.  By the equivalence (due to Marcel Riesz) of the Riesz and
the Ces\`aro summability methods of order $\da\ge0$ (see \cite{Ge}), we
deduce that the Riesz operators $R_L^\da$ are uniformly bounded if and
only if $\dz>\frac{d-1}2$. It follows that the operator norms
$\|V_L^{h_0}\|$ are not uniformly bounded, so the operator norms
$\|V_L^{\tilde h}\|=\|V_L^{h_0}+V_L^{h_2}\|$ are also not uniformly
bounded.\end{proof}

\

 In the sequel, we always assume that $h$ is a $W^{\lfloor\frac{d+1}2\rfloor}BV$ filter.
  It follows from \eqref{2.3} and Lemma 2.1 that
\begin{equation}\label{2.4}
\|V_L\|=\sup_{x\in\ss}\|\Phi_L(x,\cdot)\|_1\ll 1.
\end{equation}
For $f\in W_p^s(\ss),\ 1\le p\le \infty,\ s>d/p$, we define
$$\tau_0(f)=V_{1}(f), \ \ \tau_r(f)=V_{2^{r}}(f)-V_{2^{r-1}}(f)\ \
{\rm for}\ \ r\geq 1.$$ Then $\tau_r(f)\in\Pi_{2^{r+1}}^d$, and
$\sum\limits_{r=0}^{\infty}\tau_r(f)$ converges to $f$ in the uniform
norm. Furthermore, by \eqref{2.5} and \eqref{2.4} we have
\begin{equation}\label{2.6}
\|\tau_r(f)\|_p\le\|f-V_{2^{r}}(f)\|_p+\|f-V_{2^{r-1}}(f)\|_p\ll 2^{-rs}\|f\|_{W_p^{s}}
\end{equation}
for $s>0$.

For $L\in\Bbb N$, we assume that $Q_N(g):= \sum_{i=1}^N W_i g(y_i)$ is a
positive cubature rule on $\ss$ which is exact for $g\in \Pi_{3L-1}^d$,
i.e., the points $y_i\in \mathbb{S}^d$, weights $W_i>0$, satisfy, for all
$g\in\Pi_{3L-1}^d$,
\begin{equation}\label{2.7}\int_{\mathbb{S}^d} g(x)d\sigma(x) = Q_N(g):= \sum_{i=1}^N
W_i g(y_i).
\end{equation}
The filtered hyperinterpolation operator
$V_{L,N}$ is defined by
\begin{equation*}
V_{L,N}(f)\equiv
V_{L,N}^h(f):=\sum_{\ell=0}^\infty\sum_{k=0}^{Z(d,\ell)}
h(\tfrac{\ell}{L}) \langle f,Y_{\ell,m}\rangle_{Q_N} Y_{\ell,k},
\end{equation*}
where $\langle \cdot,\cdot\rangle_{Q_N}$ is the discrete version of
the inner product, defined by $ \langle f,g\rangle_{Q_N} :=
Q_N(fg).$ According to \eqref{2.1}, we have
\begin{equation}\label{2.8}V_{L,N}(f)(x)=\langle
f,\Phi_L(\cdot,x)\rangle_{Q_N}=\sum_{j=1}^NW_if(y_i)\Phi_L(y_i,x).\end{equation}
Note that for any $P\in\Pi_{L}^{d}$ we have $P(\cdot)\Phi_L(x,
\cdot)\in\Pi_{3L-1}^d$
 for arbitrary fixed $x \in \ss$.  Thus by \eqref{2.7} we get
\begin{align}\label{2.9}P(x)=V_L(P)(x)=\langle P,\Phi_L(\cdot,x)\rangle
=\langle P,\Phi_L(\cdot,x)\rangle_{Q_N}=V_{L,N}(P).\end{align}

We are now ready to prove Theorem 1.1.

\section{Proof of Theorem 1.1}\label{sec:proofs}

In order to prove Theorem 1.1, we need the following lemma.

\begin{lem}
\cite[Theorem 2.1]{Dai2}\label{lem0.1} Suppose that $\Gamma$ is a finite set of $\ss$, and $\{\mu_\omega:\omega\in\Gamma\}$  is a set of positive numbers
 satisfying
$$\sum_{\omega\in\Gamma}\mu_\omega|P(\omega)|^{p_0}\ll \int_{\ss} |P(x)|^{p_0}d\sigma(x),\ P\in\Pi_n^d,$$
for some $0<p_0<\infty$ and some positive integer $n$. If $0<p_1<\infty,\
m\geq n,$ and $P\in\Pi_m^d,$ then
 \begin{equation}\label{3.1}\sum\limits_{\omega\in\Gamma}\mu_{\omega}|P(\omega)|^{p_1}\ll(\frac{m}{n})^{d}
 \int_{\ss}|P(y)|^{p_1} d\sigma(y).\end{equation}
 \end{lem}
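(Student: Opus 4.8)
The plan is to derive \eqref{3.1} from two ingredients that decouple the roles of $p_0$ and $p_1$: a single-scale \emph{density bound} on the discrete measure $\mu:=\sum_{\omega\in\Gamma}\mu_\omega\delta_\omega$, extracted from the hypothesis at exponent $p_0$, and a \emph{local Nikolskii inequality} for polynomials in $\Pi_m^d$ at exponent $p_1$. Throughout I write $\CB(y,r):=\{x\in\ss:\arccos(x\cdot y)\le r\}$ for the spherical cap and recall $\sigma(\CB(y,r))\asymp r^d$ for $0<r\le\pi$. The factor $(m/n)^d$ should appear exactly once, when a cap of radius $\asymp 1/m$ (the natural scale for a degree-$m$ polynomial) is enlarged to the coarser scale $\asymp 1/n$ (the scale controlled by the hypothesis).

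\emph{Local Nikolskii step.} I would first invoke the standard local Nikolskii (Bernstein--Remez type) inequality on caps, valid for every $0<p_1<\infty$: there is $A>0$ with
\[
|P(\omega)|^{p_1}\ll m^d\int_{\CB(\omega,A/m)}|P(y)|^{p_1}\,d\sigma(y),\qquad P\in\Pi_m^d,\ \omega\in\ss.
\]
Since $m\ge n$ gives $\CB(\omega,A/m)\subseteq\CB(\omega,A/n)$, this yields $|P(\omega)|^{p_1}\ll m^d\int_{\CB(\omega,A/n)}|P|^{p_1}\,d\sigma$. Multiplying by $\mu_\omega$, summing over $\omega\in\Gamma$, and interchanging sum and integral (Tonelli, using $x\in\CB(y,r)\iff y\in\CB(x,r)$) gives
\[
\sum_{\omega\in\Gamma}\mu_\omega|P(\omega)|^{p_1}\ll m^d\int_{\ss}|P(y)|^{p_1}\,\mu\bigl(\CB(y,A/n)\bigr)\,d\sigma(y).
\]
It therefore suffices to prove the density bound $\mu\bigl(\CB(y,A/n)\bigr)\ll n^{-d}$ uniformly in $y$, for then the right-hand side is $\ll m^d n^{-d}\int_\ss|P|^{p_1}\,d\sigma=(m/n)^d\int_\ss|P|^{p_1}\,d\sigma$, which is \eqref{3.1}.

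\emph{Density bound.} Here the hypothesis enters. Fix $y$ and construct a \emph{localized peak polynomial} $P_y\in\Pi_n^d$ with $\|P_y\|_\infty\ll 1$, $P_y\ge\tfrac12$ on $\CB(y,A/n)$, and $\int_\ss|P_y|^{p_0}\,d\sigma\ll n^{-d}$. Then
\[
\mu\bigl(\CB(y,A/n)\bigr)=\sum_{\omega\in\CB(y,A/n)}\mu_\omega\le 2^{p_0}\sum_{\omega\in\Gamma}\mu_\omega|P_y(\omega)|^{p_0}\ll\int_\ss|P_y|^{p_0}\,d\sigma\ll n^{-d},
\]
where the middle inequality is the hypothesis applied to $P_y\in\Pi_n^d$. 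The crux is the construction of $P_y$: the naive choice $\bigl(\tfrac{1+x\cdot y}{2}\bigr)^{\lfloor n/2\rfloor}$ is too fat (it concentrates at width $n^{-1/2}$, giving only $\int|P_y|^{p_0}\,d\sigma\asymp n^{-d/2}$), so instead I would take $P_y(x)=L_{\lfloor n/2\rfloor}(x,y)/L_{\lfloor n/2\rfloor}(y,y)$, where $L_n(x,y)=\sum_\ell\eta(\ell/n)K_\ell(x,y)$ is built from a \emph{smooth} cutoff $\eta$ with $\eta\equiv1$ on $[0,1]$ and $\supp\eta\subseteq[0,2]$. Such a kernel obeys the localization estimate $|L_n(x,y)|\ll_M n^d(1+n\arccos(x\cdot y))^{-M}$ for every $M$, together with $L_n(y,y)\asymp n^d$; these give $\|P_y\|_\infty\ll 1$ and $\int|P_y|^{p_0}\,d\sigma\ll n^{-d}$, while the Bernstein estimate $\|\nabla P_y\|_\infty\ll n$ forces $P_y\ge\tfrac12$ on $\CB(y,A/n)$ once $A$ is small enough.

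Combining the two steps gives exactly \eqref{3.1}. The main obstacle lies entirely in the two polynomial-analysis inputs rather than in the bookkeeping: establishing the local Nikolskii inequality for all $0<p_1<\infty$ (including $p_1<1$, where $|P|^{p_1}$ fails to be subharmonic and one must argue via reverse-H\"older/Remez estimates on caps), and producing the genuinely localized peak polynomial $P_y$. The latter rests on the same smooth-truncation mechanism that underlies the uniform bound on $\|V_L\|$ in Lemma 2.1 --- the rapid decay of a kernel $\sum_\ell\eta(\ell/n)K_\ell$ coming from a sufficiently smooth cutoff --- except that here, because $p_0$ can be arbitrarily small, I must demand a $C^\infty$ cutoff $\eta$ (to secure decay of arbitrarily high order $M>d/p_0$) rather than relying on the given filter $h$. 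Thus the localization estimate for $L_n$ is the technical heart on which the whole argument turns.
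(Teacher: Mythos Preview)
The paper does not prove this lemma at all: it is quoted verbatim as \cite[Theorem~2.1]{Dai2} and used as a black box in the proof of Theorem~1.1. So there is no ``paper's own proof'' to compare against.

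That said, your sketch is a correct and standard route to the result, and it is in fact close in spirit to Dai's original argument. The two-step structure---extract from the $p_0$-hypothesis a uniform mass bound $\mu(\CB(y,A/n))\ll n^{-d}$ via a localized test polynomial, then combine with a local Nikolskii/sub-mean-value estimate at scale $1/m$ for the exponent $p_1$---is exactly the right decoupling, and the factor $(m/n)^d$ falls out precisely as you describe. Your choice of $P_y$ as a normalized smoothly-truncated reproducing kernel is the standard device; you are right that a $C^\infty$ cutoff (rather than the paper's $W^{\lfloor(d+1)/2\rfloor}BV$ filter $h$) is needed here so that the decay order $M$ can be taken larger than $d/p_0$ for arbitrarily small $p_0$. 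You are also right to flag the $p_1<1$ case of the local Nikolskii inequality as the one genuinely delicate point; it does hold, but requires the Remez/reverse-H\"older machinery for spherical polynomials rather than a direct subharmonicity argument. Nothing in your outline is wrong or missing.
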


{\it Proof of Theorem 1.1.} The inequality \eqref{1.2-2} follows directly
from Lemma 2.1 (5), so it suffices to prove \eqref{repeat}. First we
consider the case $p=\infty$. Equation \eqref{2.9} combined with the
Lebesgue theorem implies that for $f\in W_\infty^s(\ss)$ and $s>0$
\begin{equation}\label{2.10}\|f-V_{L,N}(f)\|_\infty
\le (1+\|V_{L,N}\|)E_L(f)_\infty\ll (1+\|V_{L,N}\|)
L^{-s}\|f\|_{W_\infty^s}. \end{equation} Using the standard argument, we
obtain from \eqref{2.8}
$$\|V_{L,N}\|=\sup_{x\in\ss}\sum_{j=1}^NW_j|\Phi_L(x,y_j)|.$$
By \eqref{2.7}, we know  that the set  $\{y_1,\dots,y_N\}$ satisfies the
condition of Lemma \ref{lem0.1} with $p_0=2$. It then follows from Lemma
\ref{lem0.1} with $p_1=1$ that
$$\|V_{L,N}\|\ll \sup_{x\in\ss}\int_{\ss}|\Phi_L(x,y)|d\sz(y)\ll 1,$$
where in the last step we used \eqref{2.4}. Then \eqref{repeat} for the
case $p=\infty$ follows directly from \eqref{2.10}.

Now we consider the case $1\le p<\infty$. We noted already that for $f\in
W_p^s(\ss),\  s>d/p$, the series $\sum\limits_{r=0}^\infty \tau_r (f)(x)$
converges   to $f$ uniformly in $\ss$. Given $L\ge 0$, let $m$ be the
integer satisfying $2^{m}\le L<2^{m+1}$. Since $\tau_r(f)\in
\Pi_{2^{r+1}}^d$, we get from \eqref{2.9} that for $f\in W_p^s(\ss)$ with
$s>p/d$,
$$f(x)-V_{L,N}(f)(x)=\sum_{r=m}^\infty \big(\tau_r(f)(x)-V_{L,N}(\tau_r(f))(x)\big). $$
It follows that
\begin{align}\|f-V_{L,N}(f)\|_p&\leq
\sum_{r\geq m}\|\tau_r (f)-V_{L,N}(\tau_r( f))\|_p \notag\\
& \leq\sum_{r\geq m}\big(\|\tau_r f\|_p +\|V_{L,N}(\tau_r
(f))\|_p\big).\label{3.2}\end{align} We claim that for $1\le p< \infty$
and $r\ge m$,
\begin{equation}\label{3.3} \|V_{L,N}(\tau_r (f))\|_p\ll \big(\frac
{2^{r+1}}L\big)^{d/p}\|\tau_r(f)\|_p.\end{equation} In fact, for $p=1$ and
$r\ge m$, by \eqref{2.4} we obtain
\begin{align*}\|V_{L,N}(\tau_r (f))\|_1&=\|\sum_{j=1}^NW_j\,\tau_r (f)(y_j)\Phi_{L}(\cdot,y_j)\|_1\\
&\le\sum_{j=1}^NW_j\,|\tau_r (f)(y_j)|\,\|\Phi_{L}(\cdot,y_j)\|_1\\
&\ll
 \sum_{j=1}^NW_j|\tau_r (f)(y_j)|
\\ &\ll\big(\frac{2^{r+1}}L\big)^{d}\|\tau_r (f)\|_1,
\end{align*}
where in last step we used again Lemma 3.1 with $p_1=1$.

For $1<p<\infty$ and $r\ge m$, by the H\"older inequality and \eqref{2.4}
we have
\begin{align*}
&\quad\ \|V_{L,N}(\tau_r (f))\|_{p}^p=\int_{\ss}\Big|\sum_{j=1}^N
W_j\, \tau_r (f)(y_j)\,\Phi_L(x,y_j)\Big|^pd\sigma(x)\\
&\le\int_{\ss}\left(\sum_{j=1}^N W_j\, |\tau_r
(f)(y_j)|\,|\Phi_L(x,y_j)|\right)^pd\sigma(x)
\\&\le\int_{\ss}\Big(\sum_{j=1}^N W_j\,|\Phi_L(x,y_j)|\Big)^{p-1}\Big(\sum_{j=1}^N W_j\, |\tau_r
(f)(y_j)|^p\,|\Phi_L(x,y_j)|\Big)d\sigma(x)
\\&\ll\Big(\max_{x\in\ss}\|\Phi_{L}(x,\cdot)\|_1\Big)^{p-1}\,\int_{\ss}\Big(\sum_{j=1}^N W_j\, |\tau_r
(f)(y_j)|^p\,|\Phi_L(x,y_j)|\Big)d\sigma(x)
\\&\ll\sum_{j=1}^N W_j\, |\tau_r
(f)(y_j)|^p\,\|\Phi_L(\cdot,y_j)\|_1\ll\sum_{j=1}^N W_j\, |\tau_r
(f)(y_j)|^p\\ &\ll\big(\frac{2^{r+1}}L\big)^{d}\|\tau_r (f)\|_p^p,
\end{align*}
where we twice used Lemma 3.1, with $p_1=1$ and $p_1=p$. Thus
\eqref{3.3} is proved.

It then follows from \eqref{3.2}, \eqref{3.3},  and \eqref{2.6} that for
$f\in W_p^s(\ss)$, $1\le p< \infty,$ and $s>d/p,$
\begin{align*}\|f-V_{L,N}(f)\|_p&\ll \sum_{r\geq m}\Big(1+\big(\frac
{2^{r+1}}L\big)^{d/p}\Big)\|\tau_r(f)\|_p\notag\\
&\ll  2^{-md/p}\sum_{r\geq m} 2^{-r(s-d/p)}
\|f\|_{W_p^s}\\&\ll 2^{-ms}\|f\|_{W_p^s}\asymp
L^{-s}\|f\|_{W_p^s},\end{align*} which proves \eqref{repeat} for $1\le
p<\infty$.
 The proof of Theorem 1.1 is now complete.

\end{document}